\numberwithin{equation}{section}
\newtheorem{theorem}{Theorem}
\newtheorem{corollary}{Corollary}
\newtheorem{lemma}{Lemma}
\def\bal{\begin{aligned}}
\def\eal{\end{aligned}}
\def\be{\begin{equation}\label}
\def\ee{\end{equation}}
\def\bcs{\begin{cases}}
\def\ecs{\end{cases}}
\def\={\;=\;}
\def\+{\,+\,}
\def\-{\,-\,}
\def\Z{{\mathbb Z}}
\def\Q{{\mathbb Q}}
\def\R{{\mathbb R}}
\def\F{{\mathbb F}}
\def\P{{\mathbb P}}
\def\L{\Lambda}
\def\Newt{{\rm Newt}}
\def\supp{{\rm supp}}
\def\cong{\;\equiv\;}
\title{Explicit $p$-adic unit-root formulas for hypersurfaces}
\author{Masha Vlasenko}
\begin{document}
\date{\today}
\maketitle

\begin{abstract}We prove a statement on $p$-adic continuity of matrices of coefficients of the logarithm of the Artin-Mazur formal group law associated to the middle cohomology of a hypersurface. As J.~Stienstra discovered in 1986, the entries of these matrices are coefficients of powers of the equation of the hypersurface, and in certain cases they satisfy congruences of Atkin and Swinnerton-Dyer type. These congruences imply that eigenvalues of our limiting matrices are eigenvalues of Frobenius of zero $p$-adic valuation on the middle crystalline cohomology of the fibre at $p$.\end{abstract}

\tableofcontents

This note was written in preparation for the talk given at Max Plank Institute for Mathematics in Bonn on the 14th of January 2015. The author is grateful to Jan Stienstra for helpful conversations on the subject a while ago, in Bonn and Utrecht. Our main result is on the $p$-adic continuity, which was suggested by him in~\cite[p.1114]{St87L}.

\section{$p$-adic continuity of Stienstra's matrices}\label{sec:st_mat}

Let $R$ be a commutative ring and $\Lambda \in R[x_1^{\pm 1},\ldots,x_N^{\pm 1}]$ be a Laurent polynomial in $N$ variables, which we will write as
\[
\Lambda (x) \= \sum_u \; a_u \, x^u\,, \quad a_u \in R\,,
\]
where the summation runs over a finite set of vectors $u=(u_1,\ldots,u_N) \in \Z^N$ and $x^u$ means $x_1^{u_1}\ldots x_N^{u_N}$. The \emph{support} of $\Lambda$ is the set of exponents of monomials in $\Lambda$, which we denote by $\supp(\Lambda) \= \{ u : a_u \ne 0\}$. The \emph{Newton polytope} $\Newt(\Lambda) \subset \R^N$ is the convex hull of $\supp(\Lambda)$. 

Consider the set of internal integral points $J \= \Newt(\Lambda)^o \,\cap\, \Z^N$, where $\Newt(\Lambda)^o$ denotes the topological interior of the Newton poltope. If $\Newt(\Lambda)$ belongs to a hyperplane, we restrict to that hyperplane to define $\Newt(\Lambda)^o$.  Let $h=\#J$ be the number of internal integral points in the Newton polytope. Consider the following sequence of $h \times h$ matrices $\{ \beta_n; n \ge 0\}$ with entries in $R$ whose rows and columns are indexed by the elements of $J$:
\be{beta_mat}
( \beta_n )_{u,v \in J} \= \text{ the coefficient of } x^{(n+1)v - u} \text{ in } \Lambda(x)^n\,.
\ee

Let $p$ be a prime number. Assume that $R$ is endowed with a $p$th-power Frobenius endomorphism, that is a ring endomorphism $\Phi: R \to R$ satisfying 
\be{Frob}\Phi(r) \cong r^p \mod p\,, \quad r \in R\,.
\ee
Below we apply $\Phi$ to $h \times h$ matrices with entries in $R$ entry-wise. This is an endomorphism of the ring of matrices but not a $p$th-power Frobenius: property like~\eqref{Frob} won't be satisfied in general for matrices of size $h>1$. We restrict our attention to the sub-sequence $\alpha_s := \beta_{p^s-1}$, $s \ge 0$. The entries of these matrices are then given by
\[
( \alpha_s )_{u,v \in J} \= \text{ the coefficient of } x^{p^s v - u} \text{ in } \Lambda(x)^{p^s-1}\,.
\]

\begin{theorem}\label{alpha_congs}
\begin{itemize}
\item[(i)] For every $s \ge 1$
\[
\alpha_s \cong  \alpha_1 \cdot \Phi(\alpha_1) \cdot \ldots \cdot \Phi^{s-1}(\alpha_1)    \mod p \,.
\]

\item[(ii)] Assume that all matrices $\alpha_s$ are invertible. Then for every $s \ge 1$
\[
\alpha_{s+1} \cdot \Phi(\alpha_{s})^{-1} \cong \alpha_{s} \cdot \Phi(\alpha_{s-1})^{-1} \mod p^s \,.
\]
\end{itemize}
\end{theorem}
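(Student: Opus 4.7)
The plan rests on two ingredients: a convexity-based support lemma that concentrates the matrix entries on $J$, and a controlled binomial expansion of $H^{p^s}$ modulo $p^s$ (where $H = \Lambda^{p-1}$). \textbf{Support lemma:} for any $u \in J$ and $v \in \Z^N$, if $[x^{p^s v - u}]\,\Lambda^{p^s - 1} \neq 0$ then $v \in J$; indeed, nonvanishing forces $p^s v - u = (p^s-1)\,t$ for some $t \in \Newt(\Lambda)$, and the convex combination $v = \tfrac{1}{p^s} u + \tfrac{p^s-1}{p^s} t$ lies in $\Newt(\Lambda)^o$ because $u$ does and $1/p^s > 0$. This lets me extend matrices to $\Z^N$-indices with implicit zeros outside $J$, so sums over $\Z^N$ collapse to sums over $J$.

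\textbf{Part (i).} Factoring $\Lambda^{p^s - 1} = \Lambda^{p-1} \cdot (\Lambda^{p^{s-1} - 1})^p$, applying the identity $H^p \equiv H^{(\Phi)}(x^p) \pmod p$ to $H = \Lambda^{p^{s-1} - 1}$, and parametrizing the convolution for $(\alpha_s)_{u,v}$ via $w_1 = p u' - u$, $w_2 = p(p^{s-1} v - u')$, I obtain $\alpha_s \equiv \alpha_1 \cdot \Phi(\alpha_{s-1}) \pmod p$ once the support lemma restricts $u'$ to $J$. Induction with $\alpha_0 = I$ then gives (i).

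\textbf{Part (ii).} The factorization $\Lambda^{p^{s+1} - 1} = \Lambda^{p^s - 1} \cdot (\Lambda^{p-1})^{p^s}$ together with the support lemma yields the exact matrix identity $\alpha_{s+1} = \alpha_s \cdot M_s$, where
\[
(M_s)_{u',\,v} \;=\; [x^{p^s(pv - u')}]\,(\Lambda^{p-1})^{p^s}, \qquad u', v \in J.
\]
Invertibility makes the desired congruence equivalent to $M_s \equiv \Phi(M_{s-1}) \pmod{p^s}$. Setting $H = \Lambda^{p-1}$ and writing $H^p = H^{(\Phi)}(x^p) + pE$ with $E \in R[x_1^{\pm 1},\ldots,x_N^{\pm 1}]$, I expand
\[
H^{p^s} \;=\; \sum_{k=0}^{p^{s-1}} \binom{p^{s-1}}{k}\,(pE)^k\,H^{(\Phi)}(x^p)^{p^{s-1}-k}.
\]
Kummer's theorem gives $v_p\bigl(\binom{p^{s-1}}{k}\,p^k\bigr) = s - 1 - v_p(k) + k \geq s$ for every $k \geq 1$, because $k \geq v_p(k) + 1$. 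Only the $k = 0$ term survives modulo $p^s$, so $H^{p^s} \equiv (H^{(\Phi)})^{p^{s-1}}(x^p) \pmod{p^s}$; extracting the coefficient of $x^{p^s(pv - u')}$ yields $M_s \equiv \Phi(M_{s-1}) \pmod{p^s}$. The chief obstacle is spotting $M_s$ as the right auxiliary matrix: once identified, (ii) reduces to a Frobenius-type identity for powers of the single polynomial $\Lambda^{p-1}$, and the Kummer bound supplies exactly the arithmetic input needed to pass from mod $p$ to mod $p^s$.
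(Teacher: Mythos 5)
Your support lemma is correct, and your argument for part (i) is sound: after reducing $(\Lambda^{p^{s-1}-1})^p$ to $\Phi(\Lambda^{p^{s-1}-1})$ modulo $p$, the second factor is supported on $p\Z^N$, which is exactly what forces the convolution index to have the form $w_1=pu'-u$ and then, by convexity, $u'\in J$. This gives $\alpha_s\equiv\alpha_1\cdot\Phi(\alpha_{s-1})\bmod p$ and (i) follows by induction; it is a legitimate shortcut compared with the paper, which obtains (i) as a byproduct of heavier machinery.

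Part (ii), however, breaks at the first step: the claimed exact identity $\alpha_{s+1}=\alpha_s\cdot M_s$ is false, and the support lemma cannot deliver it. Writing the coefficient of $x^{p^{s+1}v-u}$ in $\Lambda^{p^s-1}\cdot(\Lambda^{p-1})^{p^s}$ as a convolution over $w_1+w_2=p^{s+1}v-u$, your factorization into a $J\times J$ matrix product requires every contributing $w_1$ to be of the form $p^su'-u$ with $u'\in\Z^N$, i.e.\ it requires $w_2\in p^s\Z^N$. In part (i) that restriction came from the second factor being a genuine Frobenius image; but $(\Lambda^{p-1})^{p^s}=\Lambda^{(p-1)p^s}$ is an honest power of $\Lambda$ and is not supported on $p^s\Z^N$, and the support lemma only constrains $w_1$ to lie in $(p^s-1)\Newt(\Lambda)$, saying nothing about its residue modulo $p^s$. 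Concretely, take $N=1$, $\Lambda=x+x^{-1}$, $J=\{0\}$, $p=3$, $s=2$: then $\alpha_3=\binom{26}{13}=10400600\equiv 2\pmod 9$, while $\alpha_2 M_2=\binom{8}{4}\binom{18}{9}=3403400\equiv 5\pmod 9$, so your identity fails even modulo $p^s$ (the theorem itself holds here: $\alpha_3\alpha_2^{-1}\equiv\alpha_2\alpha_1^{-1}\equiv 8\pmod 9$). Your subsequent step $H^{p^s}\equiv\Phi(H)^{p^{s-1}}\pmod{p^s}$ via the Kummer bound is correct but cannot rescue the argument once the factorization collapses. Repairing this is precisely the hard part of the theorem: the paper replaces the naive factorization by an exact decomposition of $\Lambda^{p^s-1}$ into sums of products $I_{\Lambda,p^{s_1}-1}\cdot\Phi^{s_1}(I_{\Lambda,p^{s_2}-1})\cdots$, in which the later factors really are Frobenius images (so the convolutions do collapse onto $J$) and the extra terms with some $s_i>1$ carry the divisibility $I_{\Lambda,n}\equiv 0\bmod p^{\ell(n)-1}$ needed to run the induction modulo $p^s$. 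Some substitute for that bookkeeping is unavoidable.
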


\bigskip

The proof of this theorem will be given in the next two sections. It is unessential that we multiply by the inverse matrices on the right in~(ii). The same congruence is true if one multiplies on the left, which is clear from the proof given in Section~\ref{sec:proof}:
\[
\Phi(\alpha_{s})^{-1} \cdot \alpha_{s+1} \cong \Phi(\alpha_{s-1})^{-1} \cdot \alpha_{s}  \mod p^s \,.
\]
Therefore we have two $p$-adic limits 
\be{lim_mat}
\alpha \= \underset{s \to \infty} \lim \alpha_{s} \cdot \Phi(\alpha_{s-1})^{-1}\,, \qquad \alpha' \= \underset{s \to \infty} \lim \Phi(\alpha_{s-1})^{-1} \cdot \alpha_{s}\,,
\ee
but characteristic polynomials of the two $h \times h$ matrices $\alpha$ and $\alpha'$ are clearly the same.

It seems that matrices \eqref{beta_mat} first appeared in~\cite{St87,St87L} in the relation to the hypersurface $X$ given by the equation $\Lambda(x)=0$. Namely, it is shown in~ \cite[Theorem~1]{St87} that (under the conditions that $R$ is Noetherian and $X$ is a projective hypersurface of degree $d>N$, flat over $R$) the Artin-Mazur functor $H^{N-1}(X,G^{\;\widehat{}}_{m,X})$ is a formal group over $R$ of dimension $h=\binom{d-1}{N}$. If in addition $R$ is flat over $\Z$, the logarithm of this formal group law is given by
\[
\ell(\tau) \= \sum_{m=1}^{\infty} \frac{\beta_{m-1}}{m} \tau^m\,.
\] 
More precisely, the logarithm is the $h$-tuple of power series $\{\ell_u(\tau) ; u \in J\}$ in $h$-tuple of variables $\{\tau_u; u \in J\}$ given by $\ell_u(\tau) \=  \sum_{m=1}^{\infty} \frac1{m} \; \sum_{v \in J} \bigl(\beta_{m-1}\bigr)_{u,v} \tau_v^m$. In~\cite{St87L} another congruences for matrices $\{\beta_n; n \ge 0\}$  were proved in the case when $X$ is a double covering of $\P^n$ and $R$ is flat over $\Z$. These latter congruences generalize those of Atkin and Swinnerton-Dyer for elliptic curves. We discuss them in Section~\ref{sec:ASD} and derive the consequences for the limiting matrices~\eqref{lim_mat}, which will explain the title of this paper. In~\cite[p.1115]{St87L} Jan Stienstra mentions that congruences of Atkin and Swinnerton-Dyer type can be proved for complete intersections using a similar method. This would mean that in a rather general setting eigenvalues of the limiting matrices~\eqref{lim_mat} are eigenvalues of the Frobenius operator acting on the middle crystalline cohomology of the fibre of $X$ at $p$.

Matrix $\alpha_1 \= \beta_{p-1}$ appeared earlier in~\cite{Mill72}: for a smooth projective hypersurface $X$, $\alpha_1$ modulo $p$ is the matrix of the Cartier operator in a certain basis $\{ w_u ; u \in J \}$ of $H^0(\overline{X}; \Omega^{N-1}_{\overline{X}/\F_p})$. The details are given in Section~\ref{sec:Cartier}. 

It seems that congruences similar to those of Theorem~\ref{alpha_congs} were considered in~\cite{Katz85}. However, our statement is explicit and for the moment we will not investigate this possible relation.     

\section{Main lemma on congruences for the powers of $\Lambda(x)$}

The ideas in this section are due to Anton Mellit and were introduced in \cite{MV13}. We reproduce them here for the sake of completeness, and also because we need a formulation with Frobenius. 

We extend the Frobenius morphism $\Phi: R \to R$ to the ring of Laurent polynomials $R[x_1^{\pm 1},\ldots,x_N^{\pm 1}]$ by setting $\Phi(x_i)=x_i^p$, so that
\[
\Phi \Bigl( \sum_u \; a_u \, x^u \Bigr) \=  \sum_u \; \Phi(a_u) \, x^{pu}\,. 
\]
It is again a $p$th-power Frobenius morphism of the ring of Laurent polynomials: $\Phi(\Gamma) \cong \Gamma^p \mod p$ for any Laurent polynomial $\Gamma(x)$.

For a non-negative integer $n$, let $\ell(n)$ be the length of the expansion of $n$ to the base $p$:
\[
n \= n^{(0)} \+ n^{(1)} \, p  \+ n^{(2)} \, p^2 \+ \ldots \+ n^{(\ell(n)-1)}\, p^{\ell(n)-1}\,,\quad 0 \le n^{(i)} \le p-1\,.
\]
For two integers $n_1, n_2 \ge 0$ we denote by
\[
n_1 * n_2 := n_1 + p^{\ell(n_1)} n_2 
\] 
the integer whose expansion to the base $p$ is the concatenation of the respective expansions of $n_1$ and $n_2$.

With a Laurent polynomial $\Lambda(x)$ we associate a sequence of Laurent polynomials $\{ I_{\Lambda,n}(x); n \ge 0 \}$ uniquely defined by the condition that for every $n \ge 0$
\be{Is}
\Lambda^n \= \sum_{n_1* \ldots *n_r=n} I_{\Lambda,n_1} \cdot \Phi^{\ell(n_1)} \Bigl( I_{\Lambda,n_2} \Bigr) \cdot \Phi^{\ell(n_1)+\ell(n_2)}\Bigl( I_{\Lambda,n_3} \Bigr) \cdot \ldots \cdot \Phi^{\sum_{j=1}^{r-1}\ell(n_j)}\Bigl( I_{\Lambda,n_r} \Bigr) \,.
\ee
The summation here runs over all possible tuples of non-negative integers whose expansions to the base p concatenate to the expansion of $n$, so the number of parts $r$ varies between $1$ and $\ell(n)$. For every $n$ the right-hand side consists of the term $I_{\L,n}$ plus the sum of products of various $I_{\L,n'}$ with $n'<n$ and their images under powers of Frobenius. Therefore this formula defines all $I_{\L,n}$ recursively.

\begin{lemma}\label{A_Lemma} Polynomials $\{ I_{\Lambda,n}; n \ge 0\}$ enjoy the following properties:
\begin{itemize}
\item[(i)] $\Newt(I_{\Lambda,n}) \subseteq n \; \Newt(\Lambda)$;
\item[(ii)] $I_{\Lambda,n} \cong 0 \mod p^{\ell(n)-1}$.
\end{itemize}
\end{lemma}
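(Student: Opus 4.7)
Both parts are proved by strong induction on $n$, with $I_{\L,0} = 1$ as the base case. For part (i), I rearrange \eqref{Is} as
\[
I_{\L,n} \= \L^n \- \sum_{\substack{n_1 * \cdots * n_r = n \\ r \ge 2}} I_{\L,n_1} \cdot \Phi^{\ell(n_1)}(I_{\L,n_2}) \cdots \Phi^{\sum_{j<r}\ell(n_j)}(I_{\L,n_r})\,.
\]
Since $\Newt(\L^n) = n\,\Newt(\L)$ and $\Newt(\Phi^k f) = p^k\,\Newt(f)$, the inductive hypothesis applied to each $n_j < n$ bounds every non-trivial product term by $\bigl(\sum_j p^{\sigma_j} n_j\bigr)\,\Newt(\L)$, where $\sigma_j = \sum_{i<j}\ell(n_i)$. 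Unpacking the definition of concatenation gives $n_1 * \cdots * n_r = \sum_j p^{\sigma_j} n_j = n$, so every summand, and hence $I_{\L,n}$, has Newton polytope inside $n\,\Newt(\L)$.

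For part (ii), I induct on $\ell := \ell(n)$; the case $\ell \le 1$ is trivial since $p^{\ell-1}=1$. The main analytic tool is the $p$-adic lifting lemma: if $a \cong b \mod p^r$ with $r \ge 1$, then $a^p \cong b^p \mod p^{r+1}$. Iterating from $\L^p \cong \Phi(\L) \mod p$ yields
\[
\L^{p^j k} \cong \Phi(\L^{p^{j-1} k}) \mod p^j, \qquad j \ge 1,\; k \ge 1.
\]
Grouping the splits in \eqref{Is} by their first block $n_1$ and reapplying \eqref{Is} to each tail $\L^{(n-n_1)/p^{\ell(n_1)}}$ collapses the defining relation into
\[
\L^n \= \sum_{n_1 \text{ valid prefix of } n} I_{\L,n_1} \cdot \Phi^{\ell(n_1)}\bigl(\L^{(n-n_1)/p^{\ell(n_1)}}\bigr)\,.
\]
Isolating the $n_1 = n$ term exhibits $I_{\L,n}$ as $\L^n$ minus the proper-prefix contributions. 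By the inductive hypothesis, each proper-prefix $I_{\L,n_1}$ is divisible by $p^{\ell(n_1) - 1}$; combined with the iterated lifting congruences applied to the tails, these terms recover $\L^n$ modulo $p^{\ell - 1}$, whence $I_{\L,n} \cong 0 \mod p^{\ell - 1}$.

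The main obstacle is the bookkeeping of $p$-adic divisibilities in the cancellation. The defect $\L^{p^j k} - \Phi(\L^{p^{j-1} k})$ lies in $p^j R[x^{\pm 1}]$, matching the base-$p$ shift $j$ exactly, and this must combine with the $p^{\ell(n_1) - 1}$ divisibility of each prefix $I_{\L,n_1}$ in such a way that the total divisibility sums to $\ell-1$ along every split. The additive identity $\sum_j \ell(n_j) = \ell$ for any split is what makes this balance work, and verifying this matching carefully across all valid prefixes is the technical heart of the argument.
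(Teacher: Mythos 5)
Your part (i) is fine and is essentially what the paper does (it dismisses (i) with ``easily follows by induction''). The problem is part (ii). Your two ingredients are individually correct: the prefix decomposition $\L^n=\sum_{n_1*n'=n} I_{\L,n_1}\,\Phi^{\ell(n_1)}(\L^{n'})$ does follow from~\eqref{Is} by grouping splits by their first block, and the congruence $\L^{p^jk}\equiv\Phi(\L^{p^{j-1}k})\bmod p^j$ is exactly the paper's property $R_s(\L)\equiv 0\bmod p^s$ of the ghost terms $R_s(\Gamma)=\Gamma^{p^s}-\Phi(\Gamma)^{p^{s-1}}$. But the sentence ``these terms recover $\L^n$ modulo $p^{\ell-1}$'' is the entire content of the lemma, and the accounting you sketch for it does not close. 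Term by term it fails badly: for the length-one prefix $n_1=n^{(0)}$ one has $I_{\L,n_1}=\L^{n^{(0)}}$, carrying no power of $p$, and $\L^{n^{(0)}}\Phi(\L^{n'})$ agrees with $\L^n=\L^{n^{(0)}}\L^{pn'}$ only modulo $p$, not modulo $p^{\ell-1}$. Nor does your ``balance'' $\sum_j\ell(n_j)=\ell$ help: summing the inductive divisibilities over a split into $r\ge 2$ blocks gives $\sum_j(\ell(n_j)-1)=\ell-r\le\ell-2$, short of the target. The divisibility by $p^{\ell-1}$ only emerges from cancellation \emph{across} different prefixes, and that cancellation is exactly what you leave unproved when you call it ``the technical heart.''

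To see concretely what is missing, take $\ell(n)=3$, $n=a+bp+cp^2$ with $b\ne 0$. Your decomposition gives
\[
I_{\L,n}\;=\;\L^a\Bigl[\,\L^{pb}\bigl(\L^{p^2c}-\Phi^2(\L^c)\bigr)\;-\;\Phi(\L^b)\bigl(\Phi(\L^{pc})-\Phi^2(\L^c)\bigr)\Bigr],
\]
and each bracketed difference is in general divisible only by the first power of $p$. Divisibility by $p^2$ requires using $\L^{p^2c}\equiv\Phi(\L^{pc})\bmod p^2$ to regroup the whole expression as $\L^a\,(\L^{pb}-\Phi(\L^b))\,(\Phi(\L^{pc})-\Phi^2(\L^c))$ modulo $p^2$, i.e.\ as a product of two defects each divisible by $p$. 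The paper's proof is precisely a systematic way of performing this regrouping for all $n$: it rebuilds $I_{\L,n}$ from scratch as $\sum_m R_m(n,\L)$ over \emph{indecomposable} tuples $m$ of length $\ell(n)$, for which $|m|\ge\ell(n)-1$ makes the divisibility manifest, and then checks that this construction satisfies the defining recursion~\eqref{Is}. Your strategy (direct induction on $\ell(n)$ via the prefix decomposition) can probably be pushed through, but as written it asserts rather than proves its key step, so there is a genuine gap.
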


\begin{proof} Part (i) easily follows by induction. To prove (ii) an alternative construction of the polynomials $I_{\Lambda,n}$ can be given, in which the congruence becomes apparent. 

For a Laurent polynomial $\Gamma(x)$ define a sequence of Laurent polynomials $\{R_s(\Gamma) ; s \ge 0\}$ by  
\[
R_s(\Gamma) \;:=\; \Gamma^{p^s} - \Phi(\Gamma)^{p^{s-1}}
\]
when $s \ge 1$ and $R_0(\Gamma):=\Gamma$. There polynomials were called \emph{ghost terms} of $\Gamma$ in~\cite{MV13}. The following properties of ghost terms easily follow by induction on $s \geq 0$:
\begin{itemize}
\item $\Lambda(x)^{p^s} \= R_0 (\Phi^s(\Lambda)) + R_1(\Phi^{s-1} (\Lambda)) + \cdots + R_{s-1}(\Phi(\Lambda)) +  R_s(\Lambda)$;
\item $ R_s(\Lambda) \cong 0 \mod \; p^s$;
\item $\Newt(R_s(\L)) \subseteq p^s \, \Newt(\L)$.
\end{itemize}

Expanding an integer $n$ to the base $p$ as $n \= n^{(0)} \+ n^{(1)} \, p  \+ n^{(2)} \, p^2 \+ \ldots \+ n^{(\ell(n)-1)}\, p^{\ell(n)-1}$ with digits $0 \leq n^{(i)} \leq p-1$ we use the first property of ghost terms to decompose the product
\[
\Lambda^{n} = \Lambda^{n_0} (\Lambda^{n_1})^p \ldots (\Lambda^{n_{\ell(n)-1}})^{p^{\ell(n)-1}}
\]
as the sum of products of ghost terms of images under powers of $\Phi$ of the collection of $p$ Laurent polynomials $\Lambda^{a}$, $0 \le a \le p-1$. We obtain that 
\[
\Lambda^{n} \= \sum_{\begin{small}\bal m=(&m_0, m_1,\ldots,m_{\ell(n)-1}) \\
& 0 \le m_i \le i \eal\end{small}} R_m(n,\L)
\] 
where
\[
R_m(n,\L) := \prod_{i=0}^{\ell(n)-1} R_{m_i}(\Phi^{i-m_i}(\L^{n^{(i)}})) 
\]
and the summation above runs over the set of all integral tuples $m =(m_0,m_1,\ldots,m_{\ell(n)-1})$  of length $\ell(n)$ satisfying $0 \leq m_i \le i$. For such a tuple we denote $|m|=\sum_i m_i$. The second and third properties of ghost terms yield respectively
\begin{itemize}
\item $R_m(n,\Lambda) \cong 0 \mod\; p^{|m|}$;
\item $\Newt\Bigl(R_m(n,\Lambda) \Bigr) \subseteq n \; \Newt(\Lambda)$.
\end{itemize}
We consider the operation of concatenation on tuples
\[
(m_0,\dots,m_{k-1}) * (m_0',\dots,m_{s-1}') := (m_0,\dots,m_{k-1},m_0',\dots,m_{s-1}')\,.
\]
A tuple $m$ satisfying $0 \le m_i \le i$ is called \emph{indecomposable} if it can not be written as a concatenation of two shorter tuples with the same property. If $m$ is an indecomposable tuple of length $k$ then $|m|\ge k-1$, which can be proved by induction in $k$. For each $n \ge 0$ consider the Laurent polynomial
\[
J_{\Lambda,n} := \sum_{\begin{small}\bal & m \text{ indecomposable}\\ &\text{ of length } \ell(n) \eal\end{small}} R_m(n,\L)\,.
\]
It is clear that $J_{\Lambda,n} \cong 0 \mod p^{\ell(n)-1}$ because $R_m(n,\Lambda)  \cong 0 \mod p^{\ell(n)-1}$ for every indecomposable tuple $m$ of length $\ell(n)$. 

Every tuple $m$ of length $\ell(n)$ can be uniquely written as a concatenation of indecomposable tuples $m \= m_1 * \ldots * m_r$ for some $1 \le r \le \ell(n)$, and if we chop the expansion of $n$ to the base $p$ into $r$ blocks of respective lengths, $n=n_1*\ldots *n_r$, we then have
\[
R_m(n,\L) \= R_{m_1}(n_1,\L) \cdot \Phi^{\ell(n_1)}\Bigl( R_{m_2}(n_2,\L)\Bigr) \cdot \Phi^{\ell(n_1)+\ell(n_2)}\Bigl( R_{m_3}(n_3,\L)\Bigr) \cdot \ldots \cdot \Phi^{\sum_{j=1}^{r-1}\ell(n_j)}\Bigl( R_{m_r}(n_r,\L)\Bigr) \,.
\]
Here $n_i$'s are blocks of $p$-adic digits rather then integers, but the notation clearly extends to this case. Suppose $n_i$ is a block of digits of length $\ell(n_i)>1$ with the right-most digit being a zero. Since the tuple $m_i$ is indecomposable, its right-most number is greater than $0$. This implies $R_{m_i}(n_i,\L)=0$ because all higher ghost terms of the constant polynomial $\L^0=1$ vanish. It follows that
\[\bal
\Lambda^n &\= \sum_{\begin{small}\bal & \text{ all tuples } m\\ & \text{ of length } \ell(n) \eal\end{small}} R_m(n,\L)\,\\
&\= \sum_{n_1* \ldots *n_r=n} J_{\Lambda,n_1} \cdot \Phi^{\ell(n_1)} \Bigl( J_{\Lambda,n_2} \Bigr) \cdot \Phi^{\ell(n_1)+\ell(n_2)}\Bigl( J_{\Lambda,n_3} \Bigr) \cdot \ldots \cdot \Phi^{\sum_{j=1}^{r-1}\ell(n_j)}\Bigl( J_{\Lambda,n_r} \Bigr) \,,
\eal\]
where in the upper sum tuples $m=(m_0,\ldots,m_{\ell(n)-1})$ are assumed to satisfy the usual condition $0 \le m_i \le i$ for all $0\le i \le \ell(n)-1$ and the lower summation runs over all possible tuples of non-negative integers whose expansions to the base p concatenate to the expansion of $n$. We see that the sequence of polynomials $\{ J_{\L,n}; n \ge 0 \}$ satisfies the defining relation~\eqref{Is}  for the sequence $\{ I_{\L,n}; n \ge 0 \}$. As we remarked earlier, such a sequence is unique. Therefore $J_{\L,n}=I_{\L,n}$ for every $n$, which finishes the proof of the lemma. 
\end{proof}

\section{Proof of Theorem 1}\label{sec:proof}

Define a new sequence of $h \times h$ matrices $\{ \gamma_s; s \ge 0\}$ so that for every $s$
\be{transform}
\alpha_s \= \sum_{s_1+\ldots+s_r=s} \gamma_{s_1} \cdot \Phi^{s_1}(\gamma_{s_2}) \cdot \Phi^{s_1+s_2}(\gamma_{s_3}) \cdot \ldots \cdot \Phi^{\sum_{j=1}^{r-1} s_j}(\gamma_{s_r}) \,,
\ee
where the summation runs over all ordered partitions of $s$ into a sum of non-negative integers:
\[
\sum_{s_1+\ldots+s_r=s} \= \qquad \sum_{r=1}^{s} \quad \sum_{\begin{tiny}\bal &(s_1,\dots,s_r) \in \Z_{\ge 1}^r \\ & s_1+\ldots+s_r=s  \eal\end{tiny}} \quad .
\] 
Formula~\eqref{transform} defines $\gamma_s$ recursively, since in the right-hand side we have $\gamma_s$ plus the sum of products of various Frobenius images of matrices $\gamma_{s'}$ with $s' < s$.

\begin{lemma}\label{trans_cong}
$\gamma_s \cong 0 \mod p^{s-1}$ for every $s \ge 1$.
\end{lemma}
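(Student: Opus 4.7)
The plan is to identify $\gamma_s$ with an explicit matrix of coefficients of the Laurent polynomial $I_{\Lambda,p^s-1}$ from Lemma~\ref{A_Lemma}, and then read the divisibility off part~(ii) of that lemma. Define
\[
(\tilde{\gamma}_s)_{u,v} \;:=\; \text{ the coefficient of } x^{p^s v-u} \text{ in } I_{\Lambda,p^s-1}, \qquad u,v\in J.
\]
Since every base-$p$ digit of $p^s-1$ equals $p-1$, every concatenation $n_1*\cdots *n_r=p^s-1$ in \eqref{Is} has blocks $n_j=p^{s_j}-1$ with $s_j=\ell(n_j)\ge 1$ and $s_1+\cdots+s_r=s$, so \eqref{Is} specialises to
\[
\Lambda^{p^s-1} \= \sum_{s_1+\cdots+s_r=s} I_{\Lambda,p^{s_1}-1}\cdot \Phi^{s_1}\bigl(I_{\Lambda,p^{s_2}-1}\bigr)\cdots \Phi^{s_1+\cdots+s_{r-1}}\bigl(I_{\Lambda,p^{s_r}-1}\bigr).
\]

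I would then extract the coefficient of $x^{p^s v-u}$ from both sides. On the right, the exponent contributed by the $j$-th factor has the form $p^{s_1+\cdots+s_{j-1}}y_j$ with $y_j\in\supp(I_{\Lambda,p^{s_j}-1})$, so the coefficient is a sum over tuples $(y_1,\dots,y_r)$ satisfying $\sum_j p^{s_1+\cdots+s_{j-1}}y_j=p^s v-u$. Parameterise these solutions by integer vectors $w_0=u$, $w_r=v$, and $w_1,\dots,w_{r-1}\in\Z^N$ through $y_j=p^{s_j}w_j-w_{j-1}$: the weighted sum telescopes to $p^s w_r-w_0=p^s v-u$, and reading the constraint modulo $p^{s_1+\cdots+s_j}$ shows that each partial sum $u+\sum_{k\le j}p^{s_1+\cdots+s_{k-1}}y_k$ is automatically divisible by $p^{s_1+\cdots+s_j}$, so the substitution is a bijection.

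The crucial step is to restrict the intermediate summation to $w_j\in J$. By Lemma~\ref{A_Lemma}(i), $y_j=(p^{s_j}-1)z_j$ for some $z_j\in\Newt(\Lambda)$, whence
\[
w_j \= \frac{p^{s_j}-1}{p^{s_j}}\,z_j \+ \frac{1}{p^{s_j}}\,w_{j-1}
\]
is a convex combination carrying strictly positive weight on $w_{j-1}$; if $w_{j-1}\in\Newt(\Lambda)^o$ then $w_j\in\Newt(\Lambda)^o$, and since $w_j$ is an integer vector by construction, $w_j\in J$. Induction on $j$ starting from $w_0=u\in J$ covers $j=1,\dots,r-1$. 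The coefficient extraction therefore yields the matrix identity
\[
\alpha_s \= \sum_{s_1+\cdots+s_r=s}\tilde{\gamma}_{s_1}\cdot \Phi^{s_1}(\tilde{\gamma}_{s_2})\cdots \Phi^{s_1+\cdots+s_{r-1}}(\tilde{\gamma}_{s_r}),
\]
which is exactly \eqref{transform} with $\tilde{\gamma}$ in place of $\gamma$. Uniqueness of the solution of \eqref{transform} forces $\gamma_s=\tilde{\gamma}_s$, and Lemma~\ref{A_Lemma}(ii) then delivers $\gamma_s\cong 0\bmod p^{\ell(p^s-1)-1}=p^{s-1}$.

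I expect the main obstacle to be the convex-geometric restriction of the intermediate $w_j$ to $J$; it is essential here that $J$ consists of \emph{interior} lattice points of $\Newt(\Lambda)$, as this is precisely what lets the interior property propagate through the convex combination and match up polynomial-coefficient sums with genuine matrix products indexed by~$J$.
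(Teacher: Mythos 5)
Your proposal is correct and follows essentially the same route as the paper: identify $\gamma_s$ with the coefficient matrices of $I_{\Lambda,p^s-1}$, use Lemma~\ref{A_Lemma}(i) together with the interiority of $u\in J$ to show the intermediate exponents land in $J$ so that coefficient extraction becomes matrix multiplication, and conclude by uniqueness of the recursion \eqref{transform} and Lemma~\ref{A_Lemma}(ii). The only cosmetic difference is that the paper peels off one factor at a time while you parameterise all intermediate vectors $w_j$ at once.
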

\begin{proof} During this proof, let's forget that matrices $\gamma_s$ were already defined and consider \[
(\gamma_s)_{u,v} := \Bigl[ I_{\L,p^s-1}\Bigr]_{x^{p^s v - u}} \,.
\]
If we show that they satisfy~\eqref{transform}, the statement will follow from (ii) in Lemma~\ref{A_Lemma}. We have
\be{trans_1}
\Lambda^{p^s-1} \= \sum_{s_1+\ldots+s_r=s} I_{\L,p^{s_1}-1} \cdot \Phi^{s_1} \Bigl( I_{\L,p^{s_2}-1} \Bigr) \cdot \Phi^{s_1+s_2}\Bigl( I_{\L,p^{s_3}-1} \Bigr) \cdot \ldots \cdot \Phi^{\sum_{j=1}^{r-1}s_j}\Bigl( I_{\L,p^{s_r}-1} \Bigr)\,.
\ee
Let's consider one term in the sum corresponding to $(s_1,\dots,s_r)$. For $k= 0, 1, \ldots, r-1$ let's denote 
\[
A_k=I_{\L,p^{s_{k+1}}-1} \cdot \Phi^{s_{k+1}} \Bigl( I_{\L,p^{s_{k+2}}-1} \Bigr) \cdot \ldots \cdot \Phi^{\sum_{j=k+1}^{r-1}s_j}\Bigl( I_{\L,p^{s_r}-1} \Bigr)\,,
\]
so that the term under consideration is $A_0$ and for each $k$ we have $A_k=I_{\Lambda,p^{s_{k+1}}-1}\cdot \Phi^{s_{k+1}}(A_{k+1})$. Let $u,v \in J$. To compute the coefficient near $x^{p^s v - u}$ in $A_0=I_{\Lambda,p^{s_1}-1}\cdot \Phi^{s_1}(A_1)$, we notice that $\supp(I_{\Lambda,p^{s_1}-1}) \subseteq (p^{s_1}-1) \Newt(\Lambda)$ by (i) in Lemma~\ref{A_Lemma} and $\supp(\Phi^{s_1}(A_1)) \subseteq p^{s_1}  \supp(A_1) \subset p^{s_1} \Z^N$. Suppose $w \in \supp(I_{\Lambda,p^{s_1}-1})$ and $\tau \in \supp(A_1)$ are such that
\be{int_pts}
w + p^{s_1} \tau \= p^{s} v \- u\,.
\ee
Since $w \in (p^{s_1}-1) \,\Newt(\Lambda)$ and $u \in \Newt(\L)^o$, we have that $w+u \in p^{s_1} \,\Newt(\L)^o$. Moreover,~\eqref{int_pts} implies that $w + u \in p^{s_1}\Z^N$, and therefore
$\frac{1}{p^{s_1}} (w + u) \in \Newt(\L)^o \cap \Z^N \= J$. On the other hand, for every $\mu \in J$ vectors
\[
w \= p^{s_1} \mu \- u \,,\qquad \tau \= p^{s-s_1} v \- \mu  
\]
satisfy~\eqref{int_pts}. It follows that
\[
\Bigl[ I_{\Lambda,p^{s_1}-1}\cdot \Phi^{s_1}(A_1) \Bigr]_{x^{p^s v - u}} \= \sum_{\mu \in J} \bigl[ I_{\Lambda,p^{s_1}-1} \bigr]_{x^{p^{s_1} \mu - u}} \cdot \Phi^{s_1}\Bigl( \bigl[ A_1 \bigr]_{p^{s-s_1}v-\mu} \Bigr)\,,  
\]
where we recognize matrix multiplication in the right-hand side. We then proceed by splitting $A_1=I_{\L,p^{s_2}-1} \cdot \Phi^{s_2}(A_2)$, and so on. Finally we obtain that 
\[\bal
\Bigl[ I_{\L,p^{s_1}-1} \cdot \Phi^{s_1} \Bigl( I_{\L,p^{s_2}-1} \Bigr) \cdot \Phi^{s_1+s_2}\Bigl( I_{\L,p^{s_3}-1} \Bigr) \cdot \ldots \cdot \Phi^{\sum_{j=1}^{r-1}s_j}\Bigl( I_{\L,p^{s_r}-1} \Bigr)\Bigl]_{x^{p^s v - u}}\\
\= \Bigl( \gamma_{s_1} \cdot \Phi^{s_1}(\gamma_{s_2}) \cdot \Phi^{s_1+s_2}(\gamma_{s_3}) \cdot \ldots \cdot \Phi^{\sum_{j=1}^{r-1}s_j}(\gamma_{s_r}) \Bigr)_{u,v} \,.
\eal\]
Summation over all partitions $s_1+\ldots+s_r=s$ in~\eqref{trans_1} gives us~\eqref{transform}.
\end{proof}

\begin{proof}[Proof of Theorem~\ref{alpha_congs}] In~\eqref{transform} terms corresponding to partitions $s_1+\dots+s_r=s$ containing at least one $s_i>1$ vanish modulo $p$ due to Lemma~\ref{trans_cong}. The only partition without $s_i > 1$ is $1+\ldots+1=s$, and therefore
\[
\alpha_s \cong \gamma_1 \cdot \Phi(\gamma_1) \cdot \ldots \cdot \Phi^{s-1}(\gamma_1) \mod p \,. 
\] 
Part (i) follows immediately since $\alpha_1 = \gamma_1$.

We will prove (ii) by induction on $s$. The case $s=1$ follows from part (i). From~\eqref{transform} we have
\[
\alpha_s \= \gamma_1 \, \Phi(\alpha_{s-1}) \+ \gamma_2 \, \Phi^2(\alpha_{s-2}) \+ \ldots \+ \gamma_{s-1} \Phi^{s-1}(\alpha_1) \+ \gamma_s
\]
for every $s \ge 1$. Substituting these expressions into the two sides of the congruence that we want to prove we get
\[\bal
&\alpha_{s+1} \, \Phi(\alpha_s)^{-1} = \gamma_1 + \sum_{j=2}^{s+1} \gamma_j \Phi^j(\alpha_{s+1-j}) \, \Phi(\alpha_s)^{-1} \,,\\
&\alpha_{s} \, \Phi(\alpha_{s-1})^{-1} = \gamma_1 + \sum_{j=2}^{s} \gamma_j \Phi^j(\alpha_{s-j}) \, \Phi(\alpha_{s-1})^{-1}\,. \\
\eal\]
Since we want to compare these two expressions modulo $p^s$ and $\gamma_{s+1} \cong 0 \mod p^s$, the last term in the upper sum can be ignored. For every $j=2,\ldots,s$ we use inductional assumption as follows:
\[\bal
& \alpha_s \Phi(\alpha_{s-1})^{-1} \cong \alpha_{s-1} \Phi(\alpha_{s-2})^{-1} \mod p^{s-1}\\
& \alpha_{s-1} \Phi(\alpha_{s-2})^{-1} \cong \alpha_{s-2} \Phi(\alpha_{s-3})^{-1} \mod p^{s-2}\\
& \vdots \\
& \alpha_{s+2-j} \Phi(\alpha_{s+1-j})^{-1} \cong \alpha_{s+1-j} \Phi(\alpha_{s-j})^{-1} \mod p^{s+1-j}\\
\eal\]
We then apply the respective power of $\Phi$ to each row and multiply these congruence out to get that modulo $p^{s+1-j}$
\[\bal
\alpha_s \Phi^{j-1}(\alpha_{s+1-j})^{-1} & \= \alpha_s \Phi(\alpha_{s-1})^{-1} \Phi(\alpha_{s-1}) \Phi^2(\alpha_{s-2})^{-1} \ldots \Phi^{j-1}(\alpha_{s+1-j})^{-1}\\
& \cong \alpha_{s-1} \Phi(\alpha_{s-2})^{-1} \Phi(\alpha_{s-2}) \Phi^2(\alpha_{s-3})^{-1} \ldots \Phi^{j-1}(\alpha_{s-j})^{-1} \= \alpha_{s-1} \Phi^{j-1}(\alpha_{s-j})^{-1}\,.
\eal\]
Since determinants of these matrices are non-zero modulo $p$, we can invert them to get
\[
\Phi^{j-1}(\alpha_{s+1-j}) \alpha_s^{-1} \cong \Phi^{j-1}(\alpha_{s-j})  \alpha_{s-1}^{-1} \mod p^{s+1-j}\,. 
\]
Now we allpy $\Phi$ and multiply by $\gamma_j$. Since $\gamma_j \cong 0 \mod p^{j-1}$ we get
\[
\gamma_j \Phi^j(\alpha_{s+1-j}) \Phi(\alpha_s)^{-1} \cong \gamma_j\Phi^{j}(\alpha_{s-j})  \Phi(\alpha_{s-1})^{-1} \mod p^{s}\,. 
\]
Summation in $j$ gives the desired result.
\end{proof}

\section{The case of a smooth projective hypersurface}\label{sec:Cartier}

Let's assume for simplicity that $R = \Z$ and $\Phi$ is the identity transformation. Let $F \in \Z[X_0,\ldots,X_N]$ be an absolutely irreducible homogeneous polynomial of degree $d > N$, such that the hypersurface 
\be{hyp}
X \;:\; F(X_0,\ldots,X_N)\=0 
\ee
is smooth. In this case 
\[
\dim H^0(X,\Omega_{X/R}^{N-1}) \= \binom{d-1}{N}
\]
(see e.g.~\cite{Nic} for this computation) is equal to the number of internal integral points in the Newton polytope $\Newt(F) \subset \R^{N+1}$. Indeed, for a generic polynomial $F$ (that is, all monomials of degree $d$ are present with nonzero coefficients) $\Newt(F)$ is the simplex in the hyperplane $\sum_{i=0}^{N}{u_i}=d$ with $N+1$ vertices of the form $(0,\ldots,0,d,0,\ldots,0)$, and the set of integral internal points is given by 
\be{int_pt}
J \= \{ u = (u_0,\ldots,u_N) \in \Z^{N+1} \;:\; u_i \ge 1 \quad \forall i\,,\;\sum_{i=0}^N u_i = d\}\,.
\ee
When $X$ is smooth, one can show that we still have $\Newt(F)^\circ \cap \Z^{N+1} = J$. 

An explicit basis in $H^0(X,\Omega_{X/R}^{N-1})$ corresponding to the elements of the set $J$ in~\eqref{int_pt} can be given as follows. Let $f(x_1,\dots,x_N)=F(1,x_1,\ldots,x_N)$. W.l.o.g. we can assume that $\frac{\partial f}{\partial x_{N}} \not \equiv 0$ on $X$. Then the forms
\be{J_basis}\bal
\omega_u \= & x_1^{u_1-1} \cdot \ldots \cdot x_N^{u_N-1} \, \frac{dx_1 \wedge \ldots \wedge x_{N-1}}{\frac{\partial f}{\partial x_N}(x_1,\ldots,x_N)} \\
& u = (u_0,\ldots,u_N) \in J
\eal\ee
consitute a basis for $H^0(X,\Omega_{X/R}^{N-1})$.  

Let $p$ be a prime and $\overline{F} \in \F_p[X_0,\dots,X_N]$ be the reduction of $F$ modulo $p$, and suppose that
\[
\overline{X} \;:\; \overline F(X_0,\ldots,X_N)\=0 
\] 
is again a smooth hypersurface over $\F_p$. It is claimed in~\cite[Corollary~1]{Mill72} that the matrix of the Cartier operator
\[
C : H^0(\overline{X},\Omega_{\overline{X}/\F_p}^{N-1}) \; \to \; H^0(\overline{X},\Omega_{\overline{X}/\F_p}^{N-1})
\]
in the basis corresponding to~\eqref{J_basis} is given by
\[\bal
(C)_{u,v \in J} &\= \text{ the coefficient of } X^{p v - u} \text{ in } \overline F(X)^{p-1} \\
&\= \alpha_1 \mod p\,,
\eal\] 
where $\alpha_1\=\beta_{p-1}$ in the notation of Section~\ref{sec:st_mat} with $\Lambda=F$.

We see that $\alpha_1$ is invertible as a matrix with $\Z_p$-entries if and only if the Cartier operator is invertible. If $\alpha_1$ is invertible, then $\det(\alpha_1) \not \cong 0 \mod p$ and $\det(\alpha_s) \cong  \det(\alpha_1)^s  \not \cong 0   \mod p$, so all $\alpha_s$ are invertible as well.

\section{Application of Atkin and Swinnerton-Dyer type congruences}\label{sec:ASD}

In this section we combine Theorem~\ref{alpha_congs} with the main result of~\cite{St87L}. Here again $R=\Z$ and $\Phi$ is trivial. Let $G(X_0,\ldots,X_N)$ be a polynomial with integral coefficients, homogeneous of degree $2d$, and $d>N$. Let $X$ be the double covering of $\P^{N}$ defined by the equation $W^2 \= G(X_0,\ldots,X_N)$. Let 
\[
\L(W,X_0,\ldots,X_N) \= W^2 \- G(X_0,\ldots,X_N)\,.
\]

Let $J$ be the set from~\eqref{int_pt}, that is
\[J \= \Bigl\{ u=(u_0,\ldots,u_N) \in \Z^{N+1} : u_i \ge 1 \; \forall i \;,\;\sum_{i=0}^N u_i = d\Bigr\}\,.\]
For a generic $G$ (that is, every monomial of degree $2d$ is present with a nonzero coefficient), we claim that $\Newt(\L)^{\circ}\cap \Z^{N+2} \= \{ (1,u) : u \in J\}$.
Indeed, $\Newt(G)$ is the simplex in the hyperplane $\sum_{i=0}^{N}{u_i}=2d$ with $N+1$ vertices of the form $(0,\ldots,0,2d,0,\ldots,0)$, and 
\[
\Newt(G)^0 \= \{ u \in \R^{N+1} \;:\; u_i > 0 \; \forall i \;,\;\sum_{i=0}^N u_i = 2 d  \}\,. 
\]
$\Newt(\Lambda)$ is the convex hull of $(2,0,\ldots,0)$ and $0 \times \Newt(G)$ in $\R^{N+2}$, so our claim follows.

When $X$ is smooth, it is shown in~\cite[Appendix A4]{St87} that $\dim H^0(X,\Omega_{X/R}^N) \= \binom{d-1}{N}$, and an explicit basis of global $N$-forms indexed by the elements of $J$ is constructed.

\bigskip

Let $h=\# J = \binom{d-1}{N}$. We will assume further on that $\Newt(\L)^{\circ}\cap \Z^{N+2} \= \{ (1,u) : u \in J\}$. We consider $h \times h$ matrices from Section~\ref{sec:st_mat}:
\[
\bigl(\beta_n\bigr)_{u,v \in J} \= \text{ the coefficient of } W^{n}X^{(n+1)v - u} \text{ in } \Lambda^{n} \= \bcs 0\,, \quad n \text{ odd},  \\
(-1)^{\frac{n}2} \binom{n}{n/2}  \cdot \delta_n \,, \quad n \text{ even},\\
\ecs\\
\]
where
\[
\bigl(\delta_n\bigr)_{u,v \in J} \= \text{ the coefficient of } X^{(n+1)v - u} \text{ in } G^{\frac{n}2}\,.
\]

\bigskip

Let's now fix a prime $p$ and consider the subsequence of matrices $\alpha_s \= \beta_{p^s-1}$, $s \ge 0$. Assuming that $\det(\alpha_1) \ne 0 \mod p$, we have that all $\alpha_s$ are invertible over $\Z_p$ by (i) of Theorem~\ref{alpha_congs} and by~(ii) there exists the limiting $h \times h$ matrix
\[
\alpha \;:= \;  \underset{s \to \infty} \lim \alpha_s \alpha_{s-1}^{-1}
\]
with entries in $\Z_p$. 

Let $\overline{X} \= X \times_{Spec(\Z)} Spec(\F_p)$ be the fibre of $X$ at $p$, and suppose that there exists a smooth projective variety $\overline{Y}$ over $\F_p$ and a morphism $\pi: \overline{Y} \to \overline{X}$ such that $\pi_* O_{\overline{Y}} = O_{\overline{X}}$ and $R^i \pi_* O_{\overline{Y}}  = 0$ for $i \ge 1$. Consider the (reciprocal) characteristic polynomial of the $p$th power Frobenius $\it{Frob}_p$ acting on the middle crystalline cohomology of $\overline{Y}$
\[
\det \Bigl( 1 - T \cdot \it{Frob}_p | H^N_{cris}(\overline{Y}) \otimes \Q \Bigr) \= 1 \+ a_1 T \+ \ldots \+ a_k T^k \;\in\; \Z[T]\,.
\]
Let $n$ be a positive integer and $\nu$ be the maximal integer such that $p^{\nu} |n$. By~\cite[Theorem 0.1]{St87L}, if $\nu \ge h$ then
\be{ASD}
\delta_{n-1} \+ a_1 \,\delta_{\frac {n}q - 1 } \+ \ldots \+ a_k \,\delta_{\frac{n}{q^k}-1} \equiv 0 \mod p^{\nu - h+1}\,,
\ee
where we assume that $\delta_n = 0$ for $n \not \in 2\mathbb{N}$.

\bigskip

\begin{corollary} When $\det(\alpha_1) \ne 0 \mod p$, the matrix $\alpha = \underset{s \to \infty} \lim \alpha_s \alpha_{s-1}^{-1}$
satisfies
\[
\alpha^k \+ a_1 \alpha^{k-1} \+ \ldots \+ a_{k-1} \alpha \+ a_k \= 0\,. 
\]
\end{corollary}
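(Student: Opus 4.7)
The plan is to feed the congruences~\eqref{ASD} into the telescoping structure behind Theorem~\ref{alpha_congs}. Setting $n\=p^{s+k}$ in~\eqref{ASD}, valid once $s+k\ge h$, gives
\[
\sum_{j=0}^{k} a_j\, \delta_{p^{s+k-j}-1} \;\cong\; 0 \mod p^{s+k-h+1}\,,\qquad a_0 := 1\,.
\]
To pass from the $\delta$'s to the $\alpha$'s I would use $\alpha_m \= \eta_m\,\delta_{p^m-1}$, where $\eta_m := (-1)^{(p^m-1)/2}\binom{p^m-1}{(p^m-1)/2}$. The scalar $\eta_m$ is a $p$-adic unit by Kummer's theorem: the base-$p$ digits of $(p^m-1)/2$ are all $(p-1)/2$, so doubling produces no carries. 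Substituting $\delta_{p^m-1}\=\eta_m^{-1}\alpha_m$, multiplying through by the unit $\eta_{s+k}$, and then on the right by $\alpha_s^{-1}$ (invertible by Theorem~\ref{alpha_congs}(i) and the hypothesis $\det\alpha_1\not\cong 0 \mod p$), rewrites the congruence as
\[
\sum_{j=0}^{k} a_j\, \frac{\eta_{s+k}}{\eta_{s+k-j}}\,\alpha_{s+k-j}\,\alpha_s^{-1} \;\cong\; 0 \mod p^{s+k-h+1}\,.
\]

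The matrix factor $\alpha_{s+k-j}\alpha_s^{-1}$ telescopes into $k-j$ consecutive ratios of the form $\alpha_i\alpha_{i-1}^{-1}$, each of which converges to $\alpha$ by Theorem~\ref{alpha_congs}(ii) with trivial $\Phi$; hence by continuity of matrix multiplication this factor tends to $\alpha^{k-j}$ as $s\to\infty$. Provided each scalar ratio $\eta_{s+k}/\eta_{s+k-j}$ also converges to $1$, passing to the $p$-adic limit of the displayed congruence yields the desired relation $\alpha^{k}+a_1\alpha^{k-1}+\ldots+a_k=0$ directly, with no further cancellation needed.

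The essential input -- and the main obstacle -- is the scalar convergence $\eta_{m+1}/\eta_m\to 1$ in $\Z_p^{\times}$ as $m\to\infty$; the general ratio $\eta_{s+k}/\eta_{s+k-j}$ is a product of $j$ consecutive such basic ratios and inherits the same limit. I would handle this via Morita's $p$-adic gamma function. Separating multiples of $p$ in the factorials that define the binomial coefficients leads to the closed form
\[
\frac{\eta_{m+1}}{\eta_m}\=(-1)^{1+p^{m}(p-1)/2}\,\frac{\Gamma_p(p^{m+1})}{\Gamma_p\bigl((p^{m+1}+1)/2\bigr)^{2}}\,,
\]
and the continuity of $\Gamma_p$ gives $\Gamma_p(p^{m+1})\to\Gamma_p(0)\=1$ and $\Gamma_p\bigl((p^{m+1}+1)/2\bigr)\to\Gamma_p(1/2)$. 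Combined with the reflection identity $\Gamma_p(1/2)^{2}\=(-1)^{(p+1)/2}$, a short parity case split on $p \bmod 4$ shows that the limit equals $+1$ in either residue class. This $p$-adic computation is the delicate point; everything else is bookkeeping of the telescoping and of~\eqref{ASD}.
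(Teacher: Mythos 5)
Your proposal is correct and follows essentially the same route as the paper: factor $\alpha_m$ as the unit scalar $(-1)^{(p^m-1)/2}\binom{p^m-1}{(p^m-1)/2}$ times $\delta_{p^m-1}$, show via Morita's $p$-adic gamma function that the consecutive scalar ratios tend to $1$, and then pass to the limit in the Atkin--Swinnerton-Dyer congruences~\eqref{ASD} at $n=p^{s+k}$. You merely make explicit two steps the paper leaves implicit (the gamma-function computation and the final telescoping), and your $\Gamma_p$ identity and sign bookkeeping check out.
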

\begin{proof}
The sequence 
\[
b_n \= \bcs 0\,, \quad n \text{ odd},  \\
(-1)^{\frac{n}2} \binom{n}{n/2}   \,, \quad n \text{ even}\\
\ecs
\]
is the sequence of constant terms of the polynomial $\L(x)=x-\frac1{x}$, to which Theorem~\ref{alpha_congs} is applicable. It follows that 
\[
\frac{b_{p^{s+1}-1}}{b_{p^s-1}} \equiv \frac{b_{p^s-1}}{b_{p^{s-1}-1}} \mod p^s
\]
for every $s \ge 1$. One can show, using for instance $p$-adic gamma function, that actually
\[
\frac{b_{p^s-1}}{b_{p^{s-1}-1}} \equiv 1 \mod p^s\,,
\]
and therefore
\[
\alpha_s \cdot \alpha_{s-1}^{-1} \equiv \delta_{p^s-1} \cdot \delta_{p^{s-1}-1}^{-1} \mod p^s  
\]
and
\[
\alpha \= \underset{s \to \infty} \lim \delta_{p^s-1} \, \delta_{p^{s-1}-1}^{-1}\,.
\]
The result now follows from Stienstra's congruences~\eqref{ASD}.
\end{proof}

\bigskip

We see that eigenvalues of $\alpha$ are also eigenvalues of the $p$th power Frobenius $\it{Frob}_p$. Since $\det(\alpha) \cong \det(\alpha_1)\mod p$ and  $\det(\alpha_1) \ne 0 \mod p$, the eigenvalues of $\alpha$ are $p$-adic units.

\section{An example}

Consider the hyperelliptic curve
\[
y^2 \= x^5 \+ 2 x^2 \+ x \+ 1\,.
\]
Stienstra's matrices here will have size $2 \times 2$:
\[
\beta_n \= \text{ the coefficients of } \begin{pmatrix} x^n y^n & x^{2 n+1} y^n \\ x^{n-1} y^n & x^{2n} y^n\end{pmatrix} \text{ in } (y^2 - x^5 - 2 x^2 - x - 1)^n \,.
\]
For example, with $p=11$ we have 

\bigskip
\begin{tabular}{c|cccc}
s & 0 & 1 & 2 & 3 \\
\hline
$\alpha_s = \beta_{p^s-1}$ & $\begin{pmatrix}1&0\\0&1\end{pmatrix}$ & $\begin{pmatrix}-81144&-1260\\-81900&-1260\end{pmatrix}$ & $\ldots$ & $\ldots$ \\
${\rm tr}(\alpha_s \cdot \alpha_{s-1}^{-1}) \mod p^s$ & & $8 + O(11)$& $8 + 11 + O(11^2)$ & $8 + 11 + 11^2 + O(11^3)$ \\
${\rm det}(\alpha_s \cdot \alpha_{s-1}^{-1}) \mod p^s$ & & $7 + O(11)$& $7 + 6 \cdot 11 + O(11^2)$ & $7 + 6 \cdot 11 + 3 \cdot 11^2 + O(11^3)$ \\
\end{tabular}

\bigskip

Using Kedlaya's algorithm we computed the reciprocal characteristic polynomial of the Frobenius on the first crystalline cohomology of the curve:
\[\bal
\det \Bigl(1 - T \cdot \it{Frob}_{11} \;|\; H^1_{cris}(C) \Bigr) & \= 1 \+ 3 T \+ 18 T^2 \+ 3 \cdot 11 T^3 \+ 11^2 T^4  \\
& \= (1 \+ 4 T \+ 11 T^2) (1 \- T \+ 11 T^2)
\eal\]
The eigenvalues of the Frobenius are
\[
\lambda_{1,2} \= -2 \pm \sqrt{-7}\,, \qquad \lambda_{3,4} \= \frac{1 \pm \sqrt{-43}}2 \,.
\]
Both $-7$ and $-43$ are squares modulo $11$, and $11$-adic unit eigenvalues are
\[\bal
\lambda_1 &\= 7 \+ 2 \cdot 11 \+ 2 \cdot 11^2 \+ O(11^3)\\
\lambda_3 &\= 1 \+ 10 \cdot 11 \+ 9 \cdot 11^2 \+ O(11^3)\\
\eal\]
We see that in the above table traces and determinants converge to
\[\bal
\lambda_1 \+ \lambda_3 &\= 8 + 11 + 11^2 + O(11^3)\\
\lambda_1 \cdot \lambda_3 &\= 7 + 6 \cdot 11 + 3 \cdot 11^2 + O(11^3)\\
\eal\]
respectively.

\bigskip

We hope that on practice our method can be used in the other direction, in order to compute Frobenius data modulo a given power of $p$ using coefficients of powers of the equation of the hypersurface. 

\end{document}